\let\chapter\undefined
\def\II{{\NZQ I}}
\def\frk{\mathfrak}               
\def\Phi{{\frk n}}
\def\Phi{{\frk N}}
\def\II{{\mathbf I}}
\def\HS{{\mathcal HS}}
\def\kb{{\mathbf k}}
\def\A{{\mathcal A}}
\def\H{{\mathcal H}}
\def\opn#1#2{\def#1{\operatorname{#2}}} 
\opn\chara{char} \opn\length{\ell} \opn\pd{pd} \opn\rk{rk}
\opn\projdim{proj\,dim} \opn\injdim{inj\,dim} \opn\rank{rank}
\opn\depth{depth} \opn\grade{grade} \opn\height{height}
\opn\embdim{emb\,dim} \opn\codim{codim}
\opn\Tr{Tr} \opn\bigrank{big\,rank}
\opn\superheight{superheight}\opn\lcm{lcm}
\opn\trdeg{tr\,deg}
\opn\reg{reg} \opn\lreg{lreg} \opn\ini{in} \opn\lpd{lpd}
\opn\size{size} \opn\sdepth{sdepth}
\opn\link{link}\opn\fdepth{fdepth}\opn\lex{lex}
\opn\LM{LM}
\opn\LC{LC}
\opn\NF{NF}
\opn\Merge{Merge}
\opn\sgn{sgn}
\opn\div{div} \opn\Div{Div} \opn\cl{cl} \opn\Pic{Pic}
\opn\Prin{Prin}
\opn\op{op}
\opn\indeg{indeg} \opn\outdeg{outdeg}
\opn\red{red}
\opn\Spec{Spec} \opn\Supp{Supp} \opn\supp{supp} \opn\Sing{Sing}
\opn\Ass{Ass} \opn\Min{Min}\opn\Mon{Mon}
\opn\Ann{Ann} \opn\Rad{Rad} \opn\Soc{Soc}
 \opn\Ker{Ker} \opn\Coker{Coker} \opn\Am{Am}
\opn\Hom{Hom} \opn\Tor{Tor} \opn\Ext{Ext} \opn\End{End}
\opn\Aut{Aut} \opn\id{id}
\opn\nat{nat}
\opn\pff{pf}
\opn\Pf{Pf} \opn\GL{GL} \opn\SL{SL} \opn\mod{mod} \opn\ord{ord}
\opn\Gin{Gin} \opn\Hilb{Hilb}\opn\sort{sort}
\opn\span{span}
\opn\Image{Image}
\opn\aff{aff} \opn\con{conv} \opn\relint{relint} \opn\st{st}
\opn\lk{lk} \opn\cn{cn} \opn\core{core} \opn\vol{vol}
\opn\link{link} \opn\star{star}\opn\lex{lex}\opn\set{set}
\opn\dist{dist}
\opn\gr{gr}
\def\pot#1#2{#1[\kern-0.28ex[#2]\kern-0.28ex]}
\opn\dirlim{\underrightarrow{\lim}}
\opn\inivlim{\underleftarrow{\lim}}
\def\prob{{\rm prob}}
\def\cdf{{\rm cdf}}
\def\Implies{\ifmmode\Longrightarrow \else
        \unskip${}\Longrightarrow{}$\ignorespaces\fi}
\def\implies{\ifmmode\Rightarrow \else
        \unskip${}\Rightarrow{}$\ignorespaces\fi}
\def\iff{\ifmmode\Longleftrightarrow \else
        \unskip${}\Longleftrightarrow{}$\ignorespaces\fi}
\newtheorem{Theorem}{Theorem}[section]
\newtheorem{Lemma}[Theorem]{Lemma}
\newtheorem{Proposition}[Theorem]{Proposition}
\theoremstyle{remark}
\newtheorem{Remark}[Theorem]{Remark}
\theoremstyle{definition}
\newtheorem{Example}[Theorem]{Example}
\let\kappa=\varkappa
\def\qed{\ifhmode\textqed\fi
      \ifmmode\ifinner\quad\qedsymbol\else\dispqed\fi\fi}
\def\textqed{\unskip\nobreak\penalty50
       \hskip2em\hbox{}\nobreak\hfil\qedsymbol
       \parfillskip=0pt \finalhyphendemerits=0}
\def\dispqed{\rlap{\qquad\qedsymbol}}
\opn\dis{dis}
\def\pnt{{\raise0.5mm\hbox{\large\bf.}}}
\opn\Lex{Lex}
\opn\syz{{\rm syz}}
\opn\spoly{{\rm spoly}}
\opn\LM{{\rm LM}}
\opn\lm{{\rm lm}}
\opn\lcm{{\rm lcm}} \opn\A{\mathcal A}
\numberwithin{equation}{section}
\tikzstyle{Cwhite}=[scale = .6,circle, fill = white, minimum size=2.5mm] 
\tikzstyle{Cgray}=[scale = .4,circle, fill = gray, minimum size=3mm] 
\tikzstyle{Cblack2}=[scale = .4,circle, fill = black, minimum size=3mm] 
\tikzstyle{Cblack}=[scale = .7,circle, fill = black, minimum size=3mm]
\tikzstyle{C0}=[scale = .9,circle, fill = black!0, inner sep = 0pt, minimum size=3mm]
\tikzstyle{C1}=[scale = .7,circle, fill = black!0, inner sep = 0pt, minimum size=3mm]
\tikzstyle{Cred}=[scale = .4,circle, fill = red, minimum size=3mm] 
\tikzstyle{Cblue}=[scale = .4,circle, fill =blue, minimum size=3mm] 
\title{Types of signature analysis in reliability \\ based on Hilbert series}
\author{Fatemeh Mohammadi}
\address{$^{1}$Institut f\"ur Mathematik, Technische Universit\"at Berlin, 10623 Berlin, Germany }
\email{fatemeh.mohammadi@math.tu-berlin.de}
\author{Eduardo S\'aenz-de-Cabez\'on}
\address{Departamento of Matem\'aticas y Computaci\'on,  Universidad de La Rioja, Spain}
\email{eduardo.saenz-de-cabezon@unirioja.es}
\author{Henry P. Wynn}
\address{Department of Statistics, London School of Economics, UK}
\email{h.wynn@lse.ac.uk}
\date{}                                           
\begin{document}
\maketitle
\begin{abstract}
The present paper studies multiple failure and signature analysis of coherent systems using the theory of monomial ideals. While system reliability has been studied using Hilbert series of monomial ideals, this is not enough to understand in a deeper sense the ideal structure features that reflect the behavior of the system under multiple simultaneous failures and signature. Therefore, we introduce the lcm-filtration of a monomial ideal, and we study the Hilbert series and resolutions of the corresponding ideals. Given a monomial ideal, we explicitly compute the resolutions for all ideals in the associated lcm-filtration, and we apply this to study coherent systems. Some computational results are shown in examples to demonstrate the usefulness of this approach and the computational issues that arise. We also study the failure distribution from a statistical point of view by means of the algebraic tools described.
\end{abstract}

\section{Algebraic reliability summary}
Let $m$ be a positive integer and consider a coherent system $S$ with $m$ components, each of which can be in a finite number of states. The set of possible states of  the whole system $S$ can be coded as elements of $\mathbb N^m$. The set of possible states contains a distinguished subset $\mathcal F$ of failure states which we assume to be coherent, meaning closed above under the standard entrywise ordering. The assumption of coherence is equivalent with saying that the failure states are precisely the exponents appearing in the monomials of the monomial ideal $\mathcal M_{\mathcal F}\subseteq \kb[x_1,\dots,x_m]$.

Fix a probability distribution on the set of states such that only finitely many of the states have positive probability. In this setting, the main object of interest from the point of view of reliability theory is the failure probability, defined as $\mathbb{E}[I_\mathcal{F}(\alpha)]$ where $I_\mathcal{F}(\alpha)$ is the indicator function of the failure set $\mathcal F$.

The indicator function and the multigraded Hilbert series of the ideal $M_\mathcal{F}$ are closely related. Specifically, if $\mathbb F$ is any free resolution of $M_{\mathcal F}$ with multigraded ranks $\gamma_{i,\mu}$ we have
\begin{eqnarray}\label{hilb}
{\HS_I(t,x)}=\frac{1+\sum_{i=1}^d (-1)^i t^i(\sum_{\gamma \in {
\mathbb{N}^n}} \gamma_{i,\alpha} x^{\alpha})}{\prod_{j=1}^n(1-x_i)}.
\end{eqnarray}
To simplify our notation, we set
\begin{eqnarray}
 \H_I(t,x)=-\sum_{i=1}^d (-1)^i t^i(\sum_{\beta \in {\mathbb{N}^n}} \gamma_{i,\alpha} x^{\alpha}), \
\end{eqnarray}
and we refer this as the numerator of the Hilbert series of $I$ which can be seen as a special kind of inclusion-exclusion formulae for counting the monomials in the union of the
ideals based on each individual minimal generator. 
We also set
\begin{eqnarray}\label{eq:hilb}
 \H_I(1,\alpha)=I_{\mathcal F}(\alpha)=-\sum_{i=1}^m(-1)^i\sum_{\mu\in\mathbb N^m}\gamma_{i,\mu}I_{x^\mu}(\alpha),
\end{eqnarray}
where $I_{x^\mu}(\alpha)$ is the indicator function of the states $\alpha\geq\mu$. If $\mathbb F$ is the minimal free resolution of $M_\mathcal{F}$ then the ranks $\gamma_{i,\mu}$ are the smallest possible and depend only on $M_\mathcal{F}$, in this case we call them the {\em multigraded Betti numbers} of $M_\mathcal{F}$ and denote them by $\beta_{i,\mu}$. The formula \eqref{eq:hilb} is potentially useful for determining failure probabilities because it reduces the problem to the computation of the simpler probabilities $\mathbb{E}[I_{x^\gamma}(\alpha)]$. By truncating the sum over $i$ we obtain obvious lower and upper bounds for $I_{\mathcal F}(\alpha)$ and thus on failure probabilities. These bounds improve the traditional Bonferroni bounds based on inclusion-exclusion. Among the bounds coming from free resolutions, the tightest ones are obtained when $\mathbb F$ is taken to be the minimal free resolution of $M_{\mathcal F}$.

From these basic principles, the authors have studied different aspects of the relation between monomial ideals and coherent systems in a series of works. In \cite{SW10,SW11} several relevant systems, including $k$-out-of-$n$ systems and variants, or series-parallel systems were studied, obtaining explicit and recursive formulas for the Betti numbers of the ideals of those systems. In \cite{SW15} the Hilbert function of the system ideal is applied to optimal design in reliability. Other works extending the scope of application of this approach are \cite{SW14} and \cite{FEH}  devoted to robustness measure of networks and percolation on trees respectively.

In this paper we propose two further steps for the application of multigraded Hilbert series and functions in probability, namely the study of multiple simultaneous failures of the system, and signature analysis. These two problems, not totally unrelated, imply a deeper knowledge of the structure of the system under consideration, beyond the information given by reliability analysis. The algebraic approach has already proven useful for the analysis of the structure of coherent systems, not only in what respects the reliability of the system but also on system design and measures of components importance \cite{SW10,SW15}. But the tools used so far, namely the Hilbert series of the ideal of the system are not enough when one needs to study simultaneous failures or signature analysis. We therefore introduce a new algebraic object that provides the necessary insight on the ideals structure: the $\lcm$-filtration. With this motivation at hand, the main contribution of the paper is the definition of the $\lcm$ filtration of a monomial ideal, the study of its resolutions and Hilbert series and also the performance of actual computations on these objects. The results obtained are then successfully applied to the study of simultaneous failures and signatures in two different paradigmatic examples.

The plan of the paper is the following. In \S\ref{sec:twoSteps} we present the two problems that we are dealing with: multiple failure and signature analysis. In this section, we see that the main algebraic object we need to understand to study these problems is the $\lcm$-filtration of a monomial ideal. The $\lcm$-filtration is studied in \S\ref{sec:lcmFiltration} where we give an explicit free resolution for the ideals involved in the $\lcm$-filtration of any monomial ideal $I$. This section uses previous work by Aramova, Herzog and Hibi \cite{AHH97, AHH98} on square free stable ideals, and by Peeva and Velasco \cite{PV05} on frames of monomial resolutions. We study also the behavior of this and other resolutions in examples. Finally, \S\ref{sec:statistics} is devoted to estimation of the probability distribution of the multiple failures of a system. During the paper we use two different, paradigmatic examples, consecutive $k$-out-of-$n$ systems and cut ideals of complete graphs. The structural differences of the behavior of these two systems with respect to multiple failures is made evident by the use of the concepts and techniques developed in the paper.

\section{Two steps further}\label{sec:twoSteps}

\subsection{Multiple failures}\label{sec:multipleFailures}

Let $S$ be a coherent system in which several minimal failures can occur at the same time. Let $Y$ be the number of such simultaneous failures. The event $\{Y\geq 1\}$ is the event that at least one elementary failure event occurs, which is the same as the event that the system fails. If $x^\alpha$ and $x^\beta$ are the monomials corresponding to two elementary failure events then $\lcm(x^\alpha,x^\beta)=x^{\alpha\wedge\beta}$ corresponds to the intersection of the two events and we have $Y\geq 2$. The corresponding ideal is $\langle x^\alpha\rangle\cap \langle x^\beta\rangle$. The full event $Y\geq 2$ corresponds to the ideal generated by all such pairs. The argument extends to $Y\geq k$  and to study the tail probabilities $\prob\{Y\geq k\}$. We now discuss these ideals in more detail.

\subsubsection{The $\lcm$-filtration and the survivor}
Let $I\subseteq \kb [x_1,\dots,x_n]$ be a monomial ideal and $\{m_1,\dots,m_r\}$ be a minimal monomial generating system of $I$. Let $I_k$ be the ideal generated by the least common multiples of all sets of $k$ distinct monomial generators of $I$,
\[
I_k=\langle \lcm(\{m_i\}_{i\in\sigma}) : \sigma\subseteq\{1,\dots,r\}, \vert\sigma\vert=k\rangle.
\]
We call $I_k$ the \emph {$k$-fold $\lcm$-ideal of $I$}. The ideals $I_k$ form a descending filtration 
\[
I=I_1\supseteq I_2\supseteq \cdots \supseteq I_r,
\]
which we call the \emph{$\lcm$-filtration} of $I$.

\medskip

The survivor functions $$F(k)=\prob\{Y\geq k\}$$ for a coherent system, are obtained from the multigraded Hilbert function of the $k$-fold $\lcm$-ideal $I=I_k$. In fact, to emphasize the counting we relabel $I_{\mathcal F}$ as $I_1$.

\begin{Example}
Consider a sequential (also named consecutive) $k$-out-of-$n$ system with $n=5, k=2$. Then 
\[
I_1=\langle x_1x_2,x_2x_3,x_3x_4,x_4x_5\rangle.
\]
The numerator of the Hilbert series obtained from the Taylor resolution of $I_1$ is formed by successively taking the $\lcm$'s of pairs, triples and so on, with sign changes and with cancellations across neighboring rows,  $I_F(\alpha)$ is the evaluation of the polynomial
\[
x_1x_2+x_2x_3+x_3x_4+x_4x_5\\
-(x_1x_2x_3+x_2x_3x_4+x_3x_4x_5+x_1x_2x_4x_5)+x_1x_2x_3x_4x_5
\]
which is the numerator of the Hilbert series of $I_1$. The Taylor resolution (which is equivalent to full inclusion-exclusion formula) uses terms from the full $\lcm$-lattice. A similar analysis gives the numerator of the Hilbert series of $I_2$ as:
\[
x_1x_2x_3+x_2x_3x_4+x_3x_4x_5+x_1x_2x_4x_5-(x_1x_2x_3x_4+x_2x_3x_4x_5+x_1x_2x_3x_4x_5).
\]
\end{Example}

The above considerations can be summarized in the following lemma.
\begin{Lemma}
Let $Y$ be the number of failure events. If $G(M_{\mathcal F})$ is the set of minimal generators of the failure ideal $I_{\mathcal F}=I_1$,
then 
\[
\mathbb{P}\{Y\geq k\}=\mathbb{E}[I_{M_k}(\alpha)]=\H_{I_k}(1,\alpha),
\]
where $I_{M_k}$ is the indicator function of the exponents of monomials in the $k$-fold $\lcm$-ideal
\[
I_k:=\langle \lcm(\{m_i\}_{i\in\sigma}):\sigma\subseteq G(M_{\mathcal F}),\vert\sigma\vert=k\rangle.
\]
\end{Lemma}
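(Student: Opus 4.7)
The proof is essentially an unpacking of definitions, and proceeds in two stages corresponding to the two equalities in the statement.

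First I would identify, for each state $\alpha \in \NN^m$, exactly which minimal failure events are realized. By the coherence assumption recalled in \S1, the minimal failure event associated with a generator $m_i = x^{\mu_i}$ of $I_{\mathcal F}$ occurs precisely when $\alpha \geq \mu_i$ componentwise, equivalently when $x^\alpha$ is divisible by $m_i$. Hence $Y = Y(\alpha)$ counts the number of generators $m_i \in G(M_{\mathcal F})$ that divide $x^\alpha$, and the event $\{Y \geq k\}$ coincides with the event that there exists some subset $\sigma \subseteq G(M_{\mathcal F})$ with $|\sigma| = k$ such that $m_i \mid x^\alpha$ for every $i \in \sigma$.

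Next I would use the elementary but crucial observation that $m_i \mid x^\alpha$ for all $i \in \sigma$ if and only if $\lcm(\{m_i\}_{i \in \sigma}) \mid x^\alpha$, since the lcm takes componentwise maxima of the exponent vectors $\mu_i$. Therefore $\{Y(\alpha) \geq k\}$ holds if and only if $x^\alpha$ is divisible by at least one of the generators of $I_k$, i.e., if and only if $x^\alpha \in I_k$. This gives the pointwise identity $I_{\{Y \geq k\}}(\alpha) = I_{M_k}(\alpha)$ of indicator functions on $\NN^m$. Taking expectations with respect to the fixed probability distribution on states yields the first equality
\[
\mathbb{P}\{Y \geq k\} = \mathbb{E}[I_{M_k}(\alpha)].
\]

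For the second equality, I would invoke formula \eqref{eq:hilb} from \S1, which says that for any monomial ideal $I$ the evaluation $\H_I(1, \alpha)$ equals the indicator function of the exponents of the monomials of $I$. Applying this to the ideal $I_k$ in place of $M_{\mathcal F}$ gives $\H_{I_k}(1, \alpha) = I_{M_k}(\alpha)$, and combining with the previous step completes the proof. There is no real obstacle here; the only subtle point worth emphasizing is the lcm observation in the second paragraph, which is what makes the algebraic object $I_k$ the correct carrier of the probabilistic event $\{Y \geq k\}$, and which motivates calling $\{I_k\}$ the $\lcm$-filtration in the first place.
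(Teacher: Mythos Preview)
Your proposal is correct and follows essentially the same approach as the paper: the lemma there is stated as a summary of the preceding discussion (``The above considerations can be summarized in the following lemma''), and that discussion is precisely the $\lcm$/intersection observation you spell out, together with the Hilbert-series identity~\eqref{eq:hilb}. Your write-up is simply a more formal rendering of the paper's informal argument.
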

As a result, we also obtain identitites, lower and upper bounds for multiple failure probabilities from free resolutions in \S\ref{sec:lcmFiltration}.

\subsection{Signature analysis}\label{sec:signatureAnalysis}
\subsubsection{Classical Signature}
The theory of signature analysis considers a system with $m$ components which fail independently with a common failure time distribution with cumulative distribution function, $\cdf$, $\mathcal{F}(t)$ and density $f(t)$. As time proceeds, components start to fail and one can write the failure times as
\[
T_{(1)},T_{(2)},\dots,T_{(m)}.
\]
In statistical terminology, these are the order statistics of the full set of failure times. Because of the distributional assumption there are no ties. Now at some (first) integer $i$ the system will fail: $T=T_{(i)}$ where $T$ is the failure time of the system. The signature $s_i$ codes the probabilities
\begin{eqnarray*}
s_i&=&\prob\{T=T_{(i)}\}\\
&=&\prob\{T\geq T_{(i)}\}-\prob\{T\geq T_{(i+1)}\},\quad i=1,\dots,m.
\end{eqnarray*}
Moreover, because the system must eventually fail, $\sum_{i=1}^n s_i =1$. For material on signature analysis see \cite{Boland,Samaniego}. A main idea of this paper is that we can derive $s_1,\ldots,s_m$ from the failure ideal. The value $s_i$ is the conditional probability that exactly $i$ components have failed, \emph{conditional} on the event that the system has failed. If we use a squarefree (binary) representation, then for the monomials describing individual failures the degree gives the number of component failures. Thus, if $P_i=\sum_{\alpha\in \mathcal{F},\vert\alpha\vert=i}p_\alpha$ and $P(\mathcal{F})=\sum_{\alpha\in \mathcal{F}}p_\alpha$ (i.e., the full probability of failure), then
\[
s_i=\frac{P_i}{P(\mathcal{F})}.
\]
Identitites,  upper and lower bounds for the $P_i$ are inherited from those for the $P(\mathcal{F})$ simply by intersecting with the event $A_i=\{\alpha : \vert\alpha\vert=i\}$, which can be found by extracting all the terms of the same degree $i$.
\medskip

We have that $s_i=\frac{\mathbb{E}[I_{E_i}(\alpha)]}{\mathbb{E}[I_{\mathcal F}(\alpha)]}$, where $E_i$ is the set consisting of $\alpha$'s for which exactly $i$ elements have failed and $\alpha$ is a failure state.
\medskip

\noindent
{\bf Notation.}
Let $I=I_S\subset \kb[x_1,\dots,x_m]$ be the failure ideal of the system $S$. We denote $I^{[i]}$ for the set of squarefree monomials of degree $i$ in $I$, and $\langle I^{[i]}\rangle$ for the ideal generated by all such  monomials. The indicator function $I_{E_i}$ is the indicator function of the set of exponents of the monomials in $I^{[i]}$.

\medskip

Observe that now the exact formulas for the probabilities $s_i$ as well as upper and lower bounds can be obtained from the free resolutions and Hilbert series of $\langle I^{[i]}\rangle$. In particular, $s_i$ is  the difference of the evaluation of the numerator of the Hilbert series of  $\langle I^{[i]}\rangle$ and  $\langle I^{[i+1]}\rangle$.

\medskip

\subsubsection{The $k$-fold signature}
Now we focus on multiple simultaneous minimal failures on the system. 
As time proceeds, components start to fail simultaneously and one can write the failure times as
\[
T_{(1)}^{k},T_{(2)}^k,\dots,T_{(m)}^k,
\]
where $T_{(i)}^k$ is the time in which we have $k$ simultaneous minimal failures with $i$ failed components.
As before, at some (first) integer $i$ the system will have $k$ simultaneous minimal failures: $T^k=T_{(i)}^k$ where $T^k$ is the  time of $k$ minimal failures in the system. The $k$-signature $s_i^k$ codes the probabilities
\begin{eqnarray*}
s_i^k&=&\prob\{T^k=T_{(i)}^k\}\\
&=&\prob\{T^k\geq T_{(i)}^k\}-\prob\{T^k\geq T_{(i+1)}^k\},\quad i=1,\dots,m,\ {\rm and}\ k=1,\ldots,r.
\end{eqnarray*}

Now  the ideal encoding the states that at least $k$ failures occur simultaneously is the $k$-fold $\lcm$-ideal $I_k$. Doing the same analysis as we did for the classical signature, we obtain that $s_i^k$ is  the difference of the evaluation of the numerator of the Hilbert series of  $\langle I_k^{[i]}\rangle$ and  $\langle I_k^{[i+1]}\rangle$. This is why we call $s_i^k$ the $k$-fold signature of the system.


\section{The $\lcm$-filtration and its resolutions}\label{sec:lcmFiltration}

\subsection{Aramova-Herzog-Hibi resolution and frames}


Let $I\subseteq k[x_1,\dots,x_d]$ be a monomial ideal with $r$ generators, and let 
$I_k$ be the $k$-fold $\lcm$-ideal of $I$. 
In this section we want to study various free resolutions of $I_k$. We will use an explicit minimal resolution for the ideal generated by all $k$-fold products of $r$ variables, which is called the $k$-out-of-$r$ ideal. Using frame theory, as developed by Peeva and Velasco in \cite{PV05}, we  construct resolutions of $I_k$, and relate them to the Taylor resolution \cite{T66}.

\subsubsection{The minimal free resolution of the $k$-out-of-$r$ ideal.}
Let $I_{k,r}\subseteq R=\kb[x_1,\dots,x_r]$ be the ideal generated by all products of $k$ different variables. This ideal corresponds in algebraic reliability theory, to the ideal of a $k$-out-of-$r$ system \cite{SW11}. It is clear that $I_{k,r}$ is a squarefree stable ideal in the sense of \cite{AHH98}, hence its minimal free resolution is of the same form of the Eliahou-Kervaire resolution \cite{EH90} as described explicitly by Aramova, Herzog and Hibi in \cite{AHH98}. Let us recall here some definitions and notation from \cite{AHH98}. For every squarefree monomial ideal $I$ minimally generated by $G(I)$ and for any squarefree monomial in $I$, there exists a unique pair $(a,b)$ of squarefree monomials in $R$ such that $a\in G(I)$, $m=a\cdot b$ and $\max(a)<\min(b)$. Thus, we have a map $g$ from the set of squarefree monomials in $I$ to the set $G(I)$. This map is given by $g(m)=a$. Now, given $j\in \{1,2,\dots,r\}$ with $j\in\supp(m)$ we set $m_j=g(x_j\cdot m)$ and $y(m)_j=(x_j\cdot m)/m_j$. Aramova, Herzog and Hibi gave an explicit resolution of any squarefree monomial ideal using the function $g$. The cellular realizations of these resolutions are described in \cite{AntonFatemeh}.

\medskip

We give here an explicit description of the Aramova-Herzog-Hibi resolution of $I_{k,r}$ independent of the map $g$ and
based only on the subsets of $\{1,\dots,r\}$.
\begin{Proposition}
Let $I_{k,r}\subseteq \kb[x_1,\dots,x_r]$ be the $k$-out-of-$r$ ideal. The minimal free resolution of $I_{k,r}$ is given by
\[
\II_{k,r}: 0\longrightarrow F_{r-k}\stackrel{\partial}{\longrightarrow}\cdots\stackrel{\partial}{\longrightarrow}F_1\stackrel{\partial}{\longrightarrow}I_{k,r}\longrightarrow 0,
\]
where
\begin{enumerate}
\item{Each generator of $F_i$ is labelled by a pair $[\sigma,\tau]$ such that $\sigma,\tau\subseteq\{1,\dots,r\}$, $\sigma\cap\tau=\emptyset$, $\vert\sigma\vert=k$, $\vert\tau\vert=i$ and $\max(\sigma)>\max(\tau)$.}
\item{The differential $\partial$ is given by 
\[
\partial([\sigma,\tau])=\sum_{j\in\tau}(-1)^{\sgn(j,\tau)}(-x_j[\sigma,\tau-j]+x_{\max(\sigma)}[\sigma- \max(\sigma)+j,\tau-j])
\]

if $\max(\tau)<\max(\sigma-\max(\sigma))$, or

\[
\partial([\sigma,\tau])=\sum_{j\in\tau}(-1)^{\sgn(j,\tau)}(-x_j[\sigma,\tau-j])+x_{\max(\sigma)}[\sigma- \max(\sigma)+\max[\tau],\tau-\max(\tau)]
\]
if $\max(\tau)>\max(\sigma-\max(\sigma))$. Observe that in the first case we have $2\vert\tau\vert$ summands, and in the second case only $\vert\tau\vert+1$ summands.}

\end{enumerate}
\end{Proposition}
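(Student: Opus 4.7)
The plan is to identify the claimed complex $\II_{k,r}$ with the Aramova–Herzog–Hibi resolution of the squarefree stable ideal $I_{k,r}$, after making the abstract map $g$ fully explicit. First I would verify that $I_{k,r}$ is squarefree stable: for any generator $x_\sigma$ with $|\sigma|=k$ and any $j<\max(\sigma)$ with $j\notin\sigma$, the monomial $(x_j/x_{\max(\sigma)})\,x_\sigma$ is again a squarefree product of $k$ distinct variables, hence lies in $I_{k,r}$. Once this is checked, the map $g$ admits a transparent combinatorial description: for a squarefree $m=x_S$ with $|S|\ge k$, the unique factorization $m=ab$ with $a\in G(I_{k,r})$ and $\max(a)<\min(b)$ forces $a$ to be the product of the $k$ smallest elements of $S$.

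Next I would match the labeling in (1) with the AHH indexing set. An AHH generator in homological degree $i$ is a pair $(u,F)$ with $u\in G(I_{k,r})$ and $F\subseteq\{1,\dots,\max(u)-1\}\setminus\supp(u)$, $|F|=i$. Setting $u=x_\sigma$ and $F=\tau$, these are exactly the conditions $|\sigma|=k$, $|\tau|=i$, $\sigma\cap\tau=\emptyset$ and $\max(\tau)<\max(\sigma)$, with equality excluded by disjointness. The length bound $r-k$ then follows because $\tau\subseteq\{1,\dots,r\}\setminus\sigma$ combined with $\max(\tau)<\max(\sigma)$ leaves at most $r-k$ admissible indices.

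The core of the argument is (2): unpacking the AHH differential
\[
\partial(u;F)=\sum_{j\in F}(-1)^{\sgn(j,F)}\Bigl(x_j\,(u;F\setminus j)-\frac{x_j u}{g(x_j u)}\bigl(g(x_j u);F\setminus j\bigr)\Bigr),
\]
where the second summand is present only when the resulting pair is admissible. For $u=x_\sigma$ and $j\in\tau$ one computes $g(x_\sigma x_j)=\prod_{i\in T}x_i$, where $T$ is the set of the $k$ smallest indices in $\sigma\cup\{j\}$. I would then split according to whether $\max(\tau)<\max(\sigma-\max(\sigma))$ or $\max(\tau)>\max(\sigma-\max(\sigma))$. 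In the first case every $j\in\tau$ lies below $\max(\sigma-\max(\sigma))$, so $T=(\sigma-\max(\sigma))\cup\{j\}$, the quotient $x_\sigma x_j/g(x_\sigma x_j)$ equals $x_{\max(\sigma)}$, and the resulting pair $[\sigma-\max(\sigma)+j,\tau-j]$ is admissible because $\max(\sigma-\max(\sigma))>\max(\tau-j)$; this reproduces all $2|\tau|$ terms. In the second case the analogous computation for $j=\max(\tau)$ produces the single swap term listed in the statement, while for $j\ne\max(\tau)$ the swap image has maximum index strictly below $\max(\tau-j)=\max(\tau)$ and is therefore inadmissible, leaving the $|\tau|+1$ summands of the second formula.

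The main obstacle I expect is the bookkeeping in the second case: verifying that the AHH differential genuinely emits \emph{only} the swap term corresponding to $j=\max(\tau)$ requires a careful reading of the definition of $g$ together with the sign convention $\sgn(j,\tau)$, and of the stability-based admissibility that kills the other potential swap terms. Once this is in place, exactness and minimality of $\II_{k,r}$ are inherited directly from the corresponding properties of the AHH resolution of squarefree stable ideals \cite{AHH98}.
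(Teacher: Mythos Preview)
Your proposal is correct and follows essentially the same route as the paper: both specialize the Aramova--Herzog--Hibi differential to $I_{k,r}$ by computing $g(x_\sigma x_j)=x_{\sigma-\max(\sigma)+j}$ (equivalently, that the coefficient $y(\sigma)_j$ is always $x_{\max(\sigma)}$) and then check, via the comparison of $\max(\sigma-\max(\sigma))$ with $\max(\tau)$, which swap terms $[\sigma-\max(\sigma)+j,\tau-j]$ remain admissible. The paper omits your explicit verification of squarefree stability and of the indexing/length match, but the core computations in its items (1) and (2) are exactly the two points you outline.
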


\begin{proof}
Since $I_{k,r}$ is squarefree stable, we can use the description of its minimal free resolution given in \cite{AHH98} which has an equivalent labeling of the generators of each free module $F_i$. Using our notation, this differential is
\[
\delta([\sigma,\tau])=\sum_{j\in\tau}(-1)^{\sgn(j,\tau)} (-x_j[\sigma,\tau-j]+y(\sigma)_j[\sigma_j,\tau-j]),
\]
where $\sigma_j=g(\sigma+j)$, $y(\sigma)_j=(\sigma+j)-\sigma_j$ and $[\mu,\rho]=0$ if $\max(\rho)>max(\mu)$.

\noindent
To complete the proof we have to show that
\begin{enumerate}
\item For all $j\in\tau$, $y(\sigma)_j=\max(\sigma)$ and $\sigma_j=\sigma-\max(\sigma)+j$
\item If $\max(\tau)>\max(\sigma-\max(\sigma))$ and $j<\max(\tau)$, then $\max(\tau-j)>\max(\sigma-\max(\sigma)+j)$ and hence $[\sigma-\max(\sigma)+j]=0$.
\end{enumerate}

To prove (1) observe that for $\sigma+j$ the unique $\delta$, $\gamma$ such that $\sigma+j=\delta\cup\gamma$ with $\delta \in G(I_{s,r})$ and $\max(\delta)<\min(\gamma)$ are $\delta=\sigma-\max(\sigma)+j$ and $\gamma=\max(\sigma)$: First, it is clear that $\delta\cup\gamma=\sigma+j$, and since $j\leq\max(\tau)<\max(\sigma)$ then $\max(\delta)<\min(\gamma)$. Then, by definition $\sigma_j=\delta=\sigma-\max(\sigma)+j$ and $y(\sigma)_j=\sigma+j-\sigma_j=\sigma+j-\sigma+\max(\sigma)-j=\max(\sigma)$.

To prove (2) first we note that if $j=\max(\tau)$, then $\max(\sigma-\max(\sigma)+j)\geq j$ and $\max(\tau-j)<j$. Thus $\max(\sigma-\max(\sigma)+j)>\max(\tau-j)$. Now if $j<\max(\tau)$, then on one hand we have that $\max(\sigma-\max(\sigma))<\max(\tau)$ implies that $\max(\sigma-\max(\sigma)+j)$ is equal to $\max(\max(\sigma-\max(\sigma)),j)$, which is strictly less than $\max(\tau)=\max(\tau-j)$. Hence we have that $[\sigma-\max(\sigma)+j,\tau-j]=0$. On the other hand, if $\max(\sigma-\max(\sigma))>\max(\tau)$ then $\max(\sigma-\max(\sigma)+j)=\max(\sigma-\max(\sigma))$ is strictly greater than $\max(\tau)=\max(\tau-j)$.
This completes the proof. 
\end{proof}

Using $\II_{k,r}$ we construct now a resolution of $I_k$. We use the techniques and terminology of \cite{PV05}. First recall that the $\lcm$ lattice of a monomial ideal $M$ denoted by $L_M$ is the lattice whose elements labeled by the least common multiples of the monomial minimal generators of $M$. A monomial ideal $M$ in a polynomial ring $S$ is called a {\em reduction} of another monomial ideal $M'$ in a polynomial ring $S'$ over the same ground field of $S$, if there exists a map $f: L_{M'}\rightarrow L_M$ which is a bijection on the atoms and preserves $\lcm$'s. Such a map is called a \emph{degeneration}. 

\begin{Lemma}
$I_k$ is a reduction of $I_{k,r}$.
\end{Lemma}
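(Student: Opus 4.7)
The plan is to exhibit an explicit degeneration map $f\colon L_{I_{k,r}} \to L_{I_k}$ as required by the definition. I first observe that the elements of $L_{I_{k,r}}$ are in bijection with subsets $\tau \subseteq \{1,\dots,r\}$ of cardinality at least $k$, via $\tau \mapsto \prod_{i\in\tau} x_i$; the atoms correspond precisely to the $k$-subsets. Analogously, since every generator of $I_k$ has the form $\lcm\{m_i:i\in\sigma\}$ with $|\sigma|=k$, every element of $L_{I_k}$ equals $\lcm\{m_i:i\in T\}$ for some $T\subseteq\{1,\dots,r\}$ with $|T|\geq k$. With this correspondence in place, I would define
$$
f\Bigl(\prod_{i\in\tau} x_i\Bigr) \;=\; \lcm\{m_i : i\in\tau\} \qquad \text{for every $\tau$ with $|\tau|\geq k$.}
$$

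The verification of the three required properties is then routine. Well-definedness of $f$ (i.e., that its image lies in $L_{I_k}$) follows from the identity
$$
\lcm\{m_i : i\in\tau\} \;=\; \lcm_{\sigma\subseteq\tau,\;|\sigma|=k}\lcm\{m_i : i\in\sigma\},
$$
which expresses the right-hand side as an lcm of generators of $I_k$. The bijection on atoms is the assignment $\sigma\mapsto \lcm\{m_i : i\in\sigma\}$ between the set of $k$-subsets of $\{1,\dots,r\}$ and the indexing set of the defining generators of $I_k$. Preservation of $\lcm$'s reduces to the elementary identity $\lcm(\lcm(A),\lcm(B))=\lcm(A\cup B)$: setting $u_j=\prod_{i\in\tau_j}x_i$, one has $\lcm(u_1,u_2)=\prod_{i\in\tau_1\cup\tau_2}x_i$, so
$$
f(\lcm(u_1,u_2)) \;=\; \lcm\{m_i : i \in \tau_1\cup\tau_2\} \;=\; \lcm(f(u_1),f(u_2)).
$$

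The subtlety I would expect the reader to pause on is the bijection-on-atoms condition. Distinct $k$-subsets $\sigma_1\neq \sigma_2$ may produce the same monomial $\lcm\{m_i:i\in\sigma_1\}=\lcm\{m_i:i\in\sigma_2\}$, so the map need not be injective on monomial \emph{values}. In the Peeva--Velasco framework of frames, however, the atoms of $L_{I_k}$ are labelled by their defining generator subsets rather than by their monomial values, and $f$ is then a bijection between the labelled atom sets. This is exactly what is needed so that the degeneration machinery of \cite{PV05} produces, in the subsequent subsections, frames and free resolutions of $I_k$ from the known minimal free resolution $\II_{k,r}$ of $I_{k,r}$.
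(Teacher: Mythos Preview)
Your proposal is correct and follows exactly the same route as the paper: both define the degeneration $f\colon L_{I_{k,r}}\to L_{I_k}$ by sending the squarefree monomial supported on $\sigma$ to $m_\sigma=\lcm(\{m_i\}_{i\in\sigma})$. The paper's proof is terse (it declares that ``$f$ is clearly a degeneration'' and only remarks that $f$ need not be an isomorphism), whereas you spell out well-definedness, $\lcm$-preservation, and the atom correspondence; your discussion of the labelled-atom subtlety in the Peeva--Velasco framework is a welcome clarification that the paper leaves implicit.
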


\begin{proof}
Consider the map $f: L_{I_{k,r}}\rightarrow L_{I_k}$ that takes $\sigma$ to $m_{\sigma}$, where $m_{\sigma}=\lcm(\{m_i\}_{i\in\sigma})$. The map $f$ is clearly a degeneration and hence $I_k$ is a reduction of $I_{k,r}$. Observe that $f$ is not an isomorphism in general.
\end{proof}

We can now construct the $f$-degeneration $f(\II_{k,r})$ following \cite[Construction 4.3]{PV05}, which by  \cite[Theorem 4.6]{PV05} is a free resolution of $I_k$. We can also construct the $f$-homogenization of $\II_{k,r}$ by  \cite[Construction 4.10]{PV05} and since $\II_{k,r}$ is the minimal free resolution of $I_{k,r}$, both $f$-degeneration and $f$-homogenization coincide. We denote the so obtained (non necessarily minimal) resolution of $I_k$ by $\II_k$. Let us describe it:

The elements of the basis of $\II_{k}$ are also labelled by pairs $[\sigma,\tau]$ such that $\sigma,\tau\subseteq\{1,\dots,r\}$, $\sigma\cap\tau=\emptyset$, $\vert\sigma\vert=s$, $\vert\tau\vert=i$ and $\max(\sigma)>\max(\tau)$. Observe that the multidegree of element $[\sigma,\tau]$ in $\II_k$ is $f(\sigma\cup\tau)$.
The differential $\partial$ is given by 
\[
\partial([\sigma,\tau])=\sum_{j\in\tau}(-1)^{\sgn(j,\tau)}(-\frac{m_{\sigma\cup\tau}}{m_{\sigma\cup\tau - j}}[\sigma,\tau-j]+\frac{m_{\sigma\cup\tau}}{m_{\sigma\cup\tau - \max(\sigma)}}[\sigma- \max(\sigma)+j,\tau-j])
\]
if $\max(\tau)<\max(\sigma-\max(\sigma))$, or

\[
\partial([\sigma,\tau])=\sum_{j\in\tau}(-1)^{\sgn(j,\tau)}-\frac{m_{\sigma\cup\tau}}{m_{\sigma\cup\tau - j}}[\sigma,\tau-j]+\frac{m_{\sigma\cup\tau}}{m_{\sigma\cup\tau - \max(\sigma)}}[\sigma- \max(\sigma)+\max[\tau],\tau-\max(\tau)]
\]
if $\max(\tau)>\max(\sigma-\max(\sigma))$. 

\begin{Theorem}
Let $I=\langle m_1,\dots,m_r \rangle$ be a monomial ideal.  $\II_k$ is a free resolution of $I_k$ for all $k$. The Betti numbers of $I_{k,r}$ are an upper bound for the Betti numbers of $I_k$.
\end{Theorem}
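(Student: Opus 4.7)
The plan is to deduce both claims directly from the frame construction of Peeva and Velasco \cite{PV05}, building on the preceding lemma which identifies the map $f\colon L_{I_{k,r}}\to L_{I_k}$, $\sigma\mapsto m_\sigma$, as a degeneration.

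First I would invoke \cite[Construction 4.3 and Theorem 4.6]{PV05}: since the preceding proposition furnishes a minimal multigraded free resolution $\II_{k,r}$ of $I_{k,r}$, its $f$-degeneration is a multigraded free complex whose basis in each homological degree is indexed by the same admissible pairs $[\sigma,\tau]$, with the multidegree of $[\sigma,\tau]$ transported to $m_{\sigma\cup\tau}$. Since $\II_{k,r}$ is minimal, \cite[Construction 4.10]{PV05} identifies the $f$-degeneration with the $f$-homogenization, and by \cite[Theorem 4.6]{PV05} the output is a bona fide free resolution of $I_k$, which is what is meant by $\II_k$.

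Second I would match the abstract homogenization coefficients with the explicit differential displayed above. A summand $-x_j\,[\sigma,\tau-j]$ of $\partial[\sigma,\tau]$ in $\II_{k,r}$ is promoted to $-(m_{\sigma\cup\tau}/m_{\sigma\cup\tau-j})\,[\sigma,\tau-j]$, because the source carries multidegree $m_{\sigma\cup\tau}$ and the target carries multidegree $m_{\sigma\cup\tau-j}$, which divides $m_{\sigma\cup\tau}$; the same substitution turns the coefficient $x_{\max(\sigma)}$ into $m_{\sigma\cup\tau}/m_{\sigma\cup\tau-\max(\sigma)}$. The dichotomy on whether $\max(\tau)>\max(\sigma-\max(\sigma))$ is inherited verbatim from the proposition, since it concerns the vanishing of a basis label, a purely combinatorial condition preserved by $f$.

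Finally, for the Betti bound, the key observation is that the basis of $\II_k$ in each homological degree $i$ is in bijection with the basis of $\II_{k,r}$ in degree $i$, so the corresponding free modules have the same rank. Since $\II_k$ is a (possibly non-minimal) free resolution of $I_k$ while $\II_{k,r}$ is minimal for $I_{k,r}$, one obtains $\beta_i(I_k)\le\rank(\II_k)_i=\rank(\II_{k,r})_i=\beta_i(I_{k,r})$. The main obstacle is the compatibility check in the second step: one must verify that the two displayed forms of the differential correspond precisely to what the Peeva--Velasco homogenization outputs, and in particular that every coefficient remains an honest monomial (i.e., $m_{\sigma\cup\tau-j}$ and $m_{\sigma\cup\tau-\max(\sigma)}$ both divide $m_{\sigma\cup\tau}$ in the relevant cases) and that no additional cancellations are forced in the short-differential case.
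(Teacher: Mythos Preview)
Your proposal is correct and follows exactly the paper's approach: the theorem is stated in the paper as an immediate consequence of the preceding discussion, which applies \cite[Construction~4.3, Theorem~4.6, Construction~4.10]{PV05} to the degeneration $f$ of Lemma~3.2 and the minimal resolution $\II_{k,r}$ of Proposition~3.1, and then reads off the explicit differential and the rank equalities. If anything, your write-up is more explicit than the paper's, which gives no separate proof for the theorem beyond that discussion.
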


\begin{Remark}
$\II_1$ is the Taylor resolution of $I$.
\end{Remark}

\subsubsection{Resolutions for $I_k$}
For any monomial ideal we can construct different free resolutions. A distinguished one is the unique (up to isomorphism) minimal free resolution, which is not alway easy to obtain computationally. For this and other reasons, one usually uses other non minimal resolutions, that are easier to obtain. A prominent example is Taylor resolution \cite{T66}. Taylor resolution begins with any set of monomial generators of the ideal (not necessary the minimal generating set) and the resolution is described combinatorially.
In the case of the ideals $I_k$ involved in the $\lcm$-filtration of $I$, there are two natural choices for Taylor resolutions, the one that uses the minimal generating set of $I_k$ and the one that uses the (redundant) generating set given by all $k$-fold $\lcm$'s of generators of $I$. We denote the first one by $\mathbb{T}_k$ and the second one by $\mathbb{T}^k$.  $\mathbb{T}_k$ is the usual Taylor resolution of $I_k$. We also have the minimal free resolution $\mathbb{M}_k$ and the above described resolution $\II_k$.

All three resolutions $\mathbb{T}_k$, $\mathbb{M}_k$ and $\II_k$ are subresolutions of $\mathbb{T}^k$. $\mathbb{M}_k$ is a sub resolution of $\mathbb{T}^k$, $\mathbb{T}_k$ and $\II_k$. But $\mathbb{T}_k$, the usual Taylor resolution of $I_k$, and $\II_k$ are not sub resolutions of each other. Therefore $\II_k$ is one of the rare examples of interesting non sub-Taylor resolutions of a monomial ideal, which also has non minimal first syzygies. This is related to Mermin's questions in \cite{M12}.

\subsection{Examples}
Let us describe now two examples that demonstrate the usefulness of the $\lcm$-filtration to detect structural differences in the ideals (i.e. the systems) under study. The first example shows ideals of consecutive linear $k$-out-of-$n$ systems, i.e. systems that fail whenever $k$ {\em consecutive} components out if its $n$ components fail. The second example is the cut ideal of the $n$-complete graph. This ideal models the behavior of all-terminal reliability of a system of $n$ components, i.e. the system fails whenever there are two disconnected nodes, that is when the graph is disconnected. The behavior of these two systems with respect to multiple simultaneous failures is well illustrated by the study of their $\lcm$-filtrations. These will be our running examples in this paper. We now introduce them and show the behavior of the resolutions of the ideals in their $\lcm$-filtrations.

\begin{Example}
Consecutive $2$-out-of-$n$ ideals:

The ideal of the consecutive linear $2$-out-of-$n$ system is minimally generated by products of consecutive pairs of the variables, $x_1x_2$ up to $x_{n-1}x_n$. This corresponds to the edge ideal of the line graph, which has been intensively studied (also from the algebraic reliability point of view, see \cite{SW11}) and for which the minimal free resolution and Betti numbers are known \cite{HV10}. Figure \ref{fig:consecutive} shows the behavior of $\mathbb{T}_k$ (green, circles), $\II_k$ (red, squares) and $\mathbb{M}_k$ (blue, diamonds) for the $\lcm$-filtrations of the ideals of consecutive $2$-out-of-$n$ systems for $n=10,11,12$. Each line corresponds to the full filtration of one ideal, the abscissa corresponds to the level $k$ of the filtration one the ordinate gives the logarithm of the {\em total size} of the resolution, understood as the sum of all the ranks of the modules in the resolution. In this example we can see that while the generating set of each $\mathbb{T}_k$ is smaller than the corresponding one of $\II_k$, the latter resolution is much closer to the minimal free resolution, except for the latest steps of the filtration.

\begin{figure}[htbp]
\begin{center}
\includegraphics[scale=0.5]{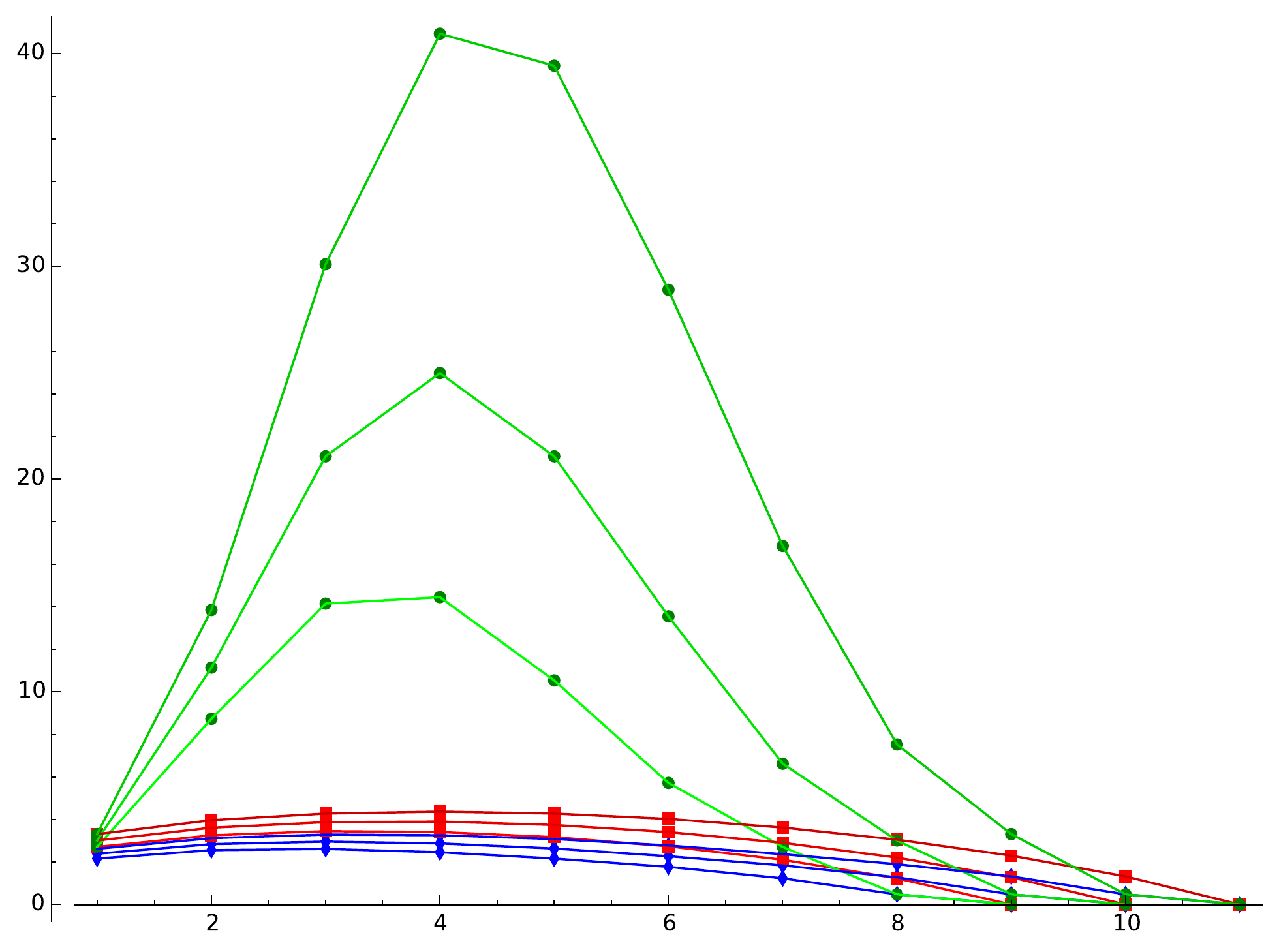}
\caption{Sizes of resolutions of $I_{2,n}$ for $n=10,11,12$}\label{fig:consecutive}
\end{center}
\end{figure}
\end{Example}

\begin{Example}
Cut ideals of graphs: 

Figure \ref{fig:complete} shows the picture of the logarithm of the size of $\mathbb{T}_k$ (green, circles), $\II_k$ (red, squares) and $\mathbb{M}_k$ (blue, diamonds) for the ideals in the $\lcm$-filtration of the ideal of the complete graphs on $4$ and $5$ vertices. Observe that the picture is completely different from Figure \ref{fig:consecutive}. There are two main differences: In the first place, Taylor resolution is closer to the minimal one than $\II_k$ and that is because the cancellations of non minimal generators are much more numerous here than in the case of consecutive $k$-out-of-$n$ ideals, and therefore the difference of the resolutions is much more evident. Another difference is that the green and blue lines have some horizontal trends. This is because for some values of $k$, the $k$-fold lcm ideals are exactly the same, i.e. in this case the filtration has a staircase behavior, it remains constant for some steps and then shrinks and so on. This was not evident just from the generating set of the ideal and has to do with the nature of the cut ideals of the complete graph, which have been extensively studied in \cite{Fatemeh,FEH,FEH2,FatemehFarbod}. Here we just mention the main results that illustrate the behavior we have just observed, and we refer the interested readers to \cite{FEH2} for more details.

Let $K_n$ be the complete graph on $n$ vertices. Let $I_n\subset \kb[x_{ij}:\ 1\leq i<j\leq n]$ be the cut ideal of $K_n$. 
For integer $k$, $1\leq k\leq n$, we let $I_{n,k}$ be the $k$-fold $\lcm$-ideal of $I_n$. 
We denote by $\mathcal{P}_{n,k}$ the set of $k$-partitions of $[n]$. Note that the cardinality of $\mathcal{P}_{n,k}$ is the Stirling number of the second kind (i.e. the number of ways to partition a set of $n$ elements into $k$ nonempty subsets). For any $k$-partition of $[n]$, we associate a monomial whose support is the set of edges between distinct blocks of the partition. 
For example for the partition $\sigma=12|3|4$ of $K_4$ we associate the monomial  $m_\sigma=x_{13}x_{14}x_{23}x_{24}x_{34}$.

We denote  by $P_{n,k}$ the ideal minimally generated by the monomials associated to the partitions in $\mathcal{P}_{n,k}$. 
We have the following relations between the ideals $I_{n,k}$ and the ideals $P_{n,k}$.

\begin{Theorem}\cite[Theorem~3.1]{FEH2}\label{thm:complete}
For all integeres $k$ and $n$, $1\leq k\leq n$ we have $$I_{2^{k-1}}=I_{2^{k-1}+1}=\cdots=I_{2^{k}-1}=P_{n,k+1}.$$
\end{Theorem}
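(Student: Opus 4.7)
The plan is to translate the lcm operation on cut monomials into the common refinement operation on set partitions, and then apply a counting argument based on how many distinct $2$-partitions coarsen a given partition.

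First I would establish a dictionary between cuts and partitions. For $2$-partitions $\sigma_1,\dots,\sigma_\ell$ of $[n]$ with common refinement $\rho$,
\[
\lcm(m_{\sigma_1},\dots,m_{\sigma_\ell}) \;=\; m_\rho,
\]
because an edge $\{u,v\}$ appears in the support of some $m_{\sigma_i}$ iff at least one $\sigma_i$ separates $u$ from $v$, iff $u,v$ lie in distinct blocks of $\rho$. The same separation principle yields $m_{\pi'}\mid m_\pi$ whenever $\pi'$ is coarser than $\pi$. The second ingredient is a counting lemma: a $j$-partition admits exactly $2^{j-1}-1$ distinct $2$-partition coarsenings, corresponding to the splits of its $j$ blocks into two nonempty parts, modulo complementation.

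With these tools in hand, I would prove $I_{n,\ell}=P_{n,k+1}$ for each $\ell$ with $2^{k-1}\le\ell\le 2^k-1$; since the chain $(I_{n,\ell})_\ell$ is descending in $\ell$, all intermediate ideals are then forced to coincide.

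For the inclusion $P_{n,k+1}\subseteq I_{n,\ell}$, I would fix a $(k+1)$-partition $\pi$; it has $2^k-1\ge\ell$ distinct $2$-partition coarsenings, so I can pick any $\ell$ of them, say $\sigma_1,\dots,\sigma_\ell$. The partition $\pi$ refines each $\sigma_i$, so $\pi$ refines their coarsest common refiner $\rho$, meaning $\rho$ is coarser than $\pi$. Hence $m_\rho\mid m_\pi$, so $m_\pi\in(m_\rho)\subseteq I_{n,\ell}$. For the reverse inclusion $I_{n,\ell}\subseteq P_{n,k+1}$, every generator of $I_{n,\ell}$ has the form $m_\rho$ where $\rho$ is the common refinement of $\ell$ distinct $2$-partitions. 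If $\rho$ had at most $k$ blocks, those $\ell$ cuts would all coarsen $\rho$ and so lie in a set of at most $2^{k-1}-1$ partitions, contradicting $\ell\ge 2^{k-1}$. Therefore $\rho$ has at least $k+1$ blocks; coarsening $\rho$ to any $(k+1)$-partition $\pi$ gives $m_\pi\mid m_\rho$ with $m_\pi\in P_{n,k+1}$, whence $m_\rho\in P_{n,k+1}$.

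The step that really needs care is the $2^{j-1}-1$ counting lemma, since it powers both inclusions through the same pigeonhole-type dichotomy: the threshold $2^{k-1}$ is \emph{exactly} one more than the number of cuts that can coarsen a $k$-partition, and the threshold $2^{k}-1$ is exactly the number that coarsen a $(k+1)$-partition. Once the dictionary between $\lcm$ and the partition refinement order is in place, the rest is routine bookkeeping.
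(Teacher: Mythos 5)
Your argument is correct, and there is in fact nothing in the paper to compare it with: the statement is imported verbatim from \cite{FEH2} (listed as in preparation), so the paper contains no proof of it. Your proposal is a complete, self-contained argument. The three ingredients you isolate all hold: $\lcm(m_{\sigma_1},\dots,m_{\sigma_\ell})=m_\rho$ with $\rho$ the common refinement, because both sides are squarefree and an edge $\{u,v\}$ divides either side exactly when some $\sigma_i$ separates $u$ from $v$; the divisibility $m_{\pi'}\mid m_\pi$ whenever $\pi'$ is coarser than $\pi$; and the count $2^{j-1}-1$ of $2$-partitions coarsening a fixed $j$-partition (assignments of its $j$ blocks to two sides, minus the two trivial ones, modulo swapping sides). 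Given these, the two inclusions go through exactly as you describe: $\ell\le 2^k-1$ lets you choose $\ell$ distinct cuts coarsening any fixed $(k+1)$-partition $\pi$, and their meet $\rho$ is coarser than (or equal to) $\pi$, so $m_\pi\in\langle m_\rho\rangle\subseteq I_\ell$; conversely $\ell\ge 2^{k-1}$ together with the pigeonhole bound forces the common refinement of any $\ell$ distinct cuts to have at least $k+1$ blocks, and coarsening it to a $(k+1)$-partition puts the corresponding generator inside $P_{n,k+1}$. Proving equality for every $\ell$ in the range makes the descending-chain remark superfluous, but it does no harm.

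Two small points worth making explicit. First, the $\lcm$-filtration is defined via the \emph{minimal} generating set, so you should record that cut monomials of distinct $2$-partitions are pairwise incomparable under divisibility (a cut monomial determines its partition, since two vertices lie in the same block exactly when the corresponding variable is absent); this guarantees both that the minimal generators of $I_n$ are precisely the $2^{n-1}-1$ cut monomials and that your $\ell$ chosen coarsenings are $\ell$ distinct minimal generators. Second, your argument needs $k+1\le n$ for $(k+1)$-partitions to exist; at the boundary $k=n$ the statement as printed is degenerate (there are only $2^{n-1}-1$ minimal generators and no $(n+1)$-partitions, so both sides are zero under the natural conventions), which is an issue with the quoted statement rather than with your proof.
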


\begin{figure}[htbp]
\begin{center}
\includegraphics[scale=0.5]{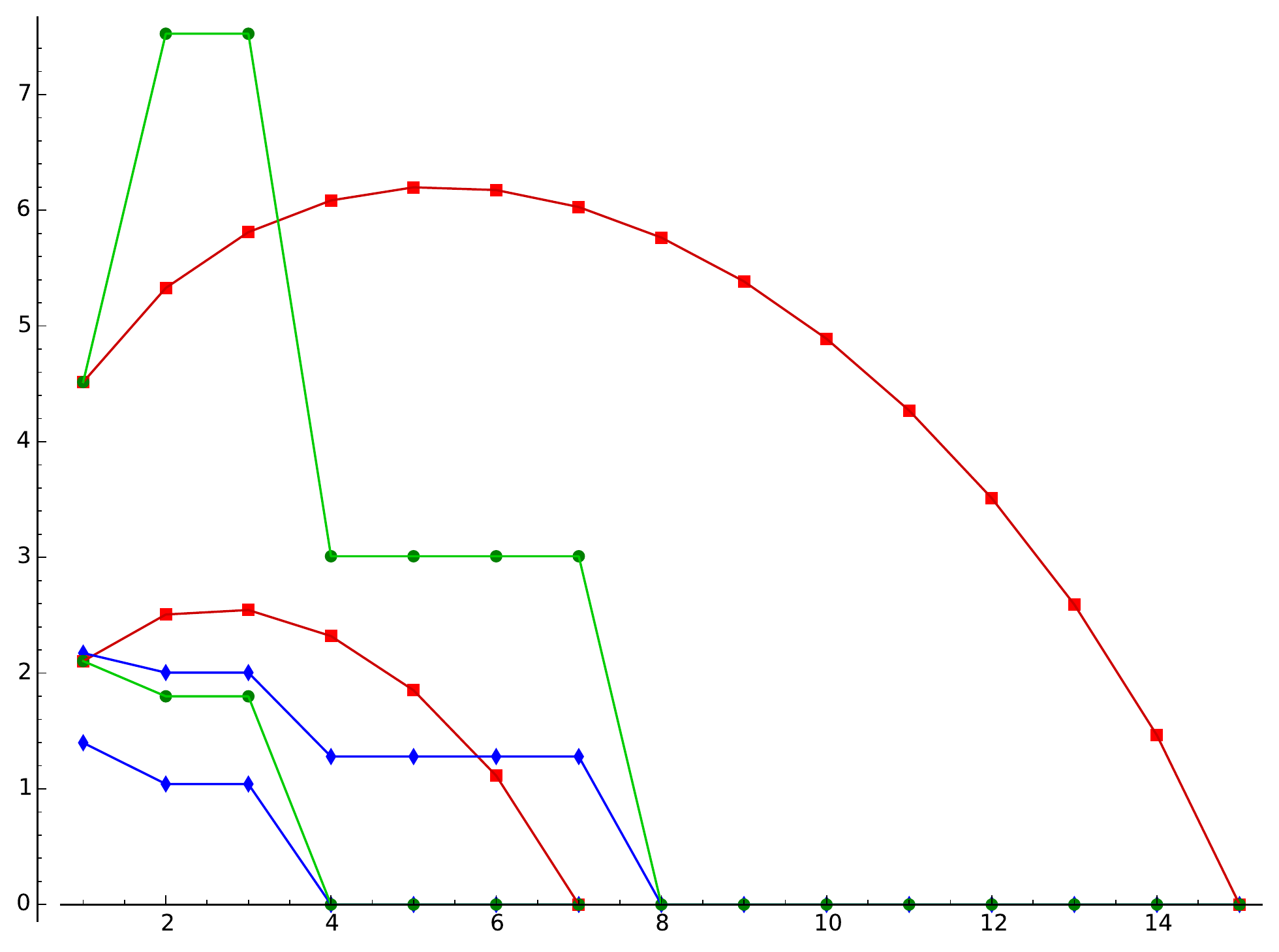}
\caption{Sizes of resolutions of the ideals in the $\lcm$ filtration of $I_{4}$ and $I_{5}$}\label{fig:complete}
\end{center}
\end{figure}

\end{Example}

\subsection{Computational cost}
When dealing with the kind of computations we are presenting in this paper, one has to consider computational costs, in particular for applications. The problem of computing the list of multiple failure events of a system (equivalently, the $\lcm$s of the generators of a monomial ideal) is intrinsically expensive in term of memory (and time). This is mainly due to the fact that the cardinality of the set of $k$-fold $\lcm$s of the elements of a set grows exponentially with respect to the size of the set. This said, we can analyze the computations needed to compute the Betti numbers of the ideals in the $\lcm$-filtration in the ways proposed in this section. There are some interesting considerations in this respect.

For our analysis we will use our running examples, namely consecutive $2$-out-of-$n$ systems and the cut ideal of complete graphs. We will also use for our analysis a simple program written in the computer algebra system {\tt Macaulay2} \cite{M2} which computes the sizes of the Taylor, Aramova-Herzog-Hibi and minimal resolutions of the ideals in the $\lcm$-filtration of a given monomial ideal. The Taylor resolutions considered will use the minimal generating sets of the corresponding ideals. The main tasks needed are the following:
\begin{enumerate}
\item Computing the $k$-fold $\lcm$s of the generators of the ideal: The computation of the $k$-fold $\lcm$s of a set of monomials is a theoretically easy task, but since the number of such $\lcm$s is large, it is not computationally irrelevant. For this, we use the inbuilt command {\tt lcm} in {\tt Macaulay2}.
\item Obtaining the minimal set of generators of the ideal $I_k$: This step implies autoreduction of the set of generators obtained in the previous step, so that we obtain the minimal generating set. We use the command {\tt monomialIdeal} in {\tt Macaulay2} which, for a given set of monomials, builds the minimal generating set.
\item Computing the minimal free resolution of $I_k$: This is in principle the computationally most expensive task. For this we use the {\tt Macaulay2} command {\tt res} and assume we already have the minimal generating set of the corresponding ideal.
\end{enumerate}

Figures \ref{fig:computations1}  and \ref{fig:computations2} show the distribution of time of each of these three tasks in the $2$-out-of-$16$ and the cut ideal of complete graph $K_5$. We have chosen these examples because the number of generators of the original ideal is in both cases $15$. The $2$-out-of-$16$ ideal is generated in degree $2$, and the number of minimal generators of each of the corresponding ideals is relatively big.  On the other hand, the generators of the cut ideal of the complete graph have bigger degrees. In this case, there are much more cancellations between the generators of the $\lcm$-ideals, which are minimally generated by much fewer monomials. These differences have their reflection in the distribution of times of the three tasks mentioned above. The cut ideal of the complete graph uses most of its time in computing the $k$-fold $\lcm$-ideals. A good optimization of this part of the algorithm is therefore crucial for the application of this method to this kind of ideals. In the case of the $2$-out-of-$16$ ideal, on the contrary, the best part of the computing time is devoted to the computation of the minimal free resolutions. Observe that the times for both examples are quite similar, the differences are due to the different distribution of computing time among tasks, which reflects the differences in the $\lcm$-filtration of both examples.

The computation of the $k$-fold $\lcm$-ideals is unavoidable for our purposes, hence the data in our experiments show that for examples like the $2$-out-of-$16$ ideals, computing the Aramova-Herzog-Hibi resolutions makes the computation possible, for the time needed for the minimal free resolution is too big too soon. On the other hand, for examples like the cut ideal of the complete graph $K_5$, the computation time of the minimal free resolution is (at this size of examples) not so relevant, and one should focus in an efficient implementation of the construction of the $k$-fold $\lcm$-ideals. In this respect, Theorem \ref{thm:complete} is a strong result that on one hand reduces the number of ideals to be computed, and on the other hand allows us to substitute the computations of the subsequents ideals by computations of partitions of the set of vertices, for which we can adapt already existing efficient algorithms.The efficient computation of cut ideals for the complete graph is an important point towards the algebraic study of the  Erd\"os-R\'enyi model of random networks. For more details on this see \cite{FEH2}.

\begin{figure}[htbp]
\begin{center}
\includegraphics[scale=0.47]{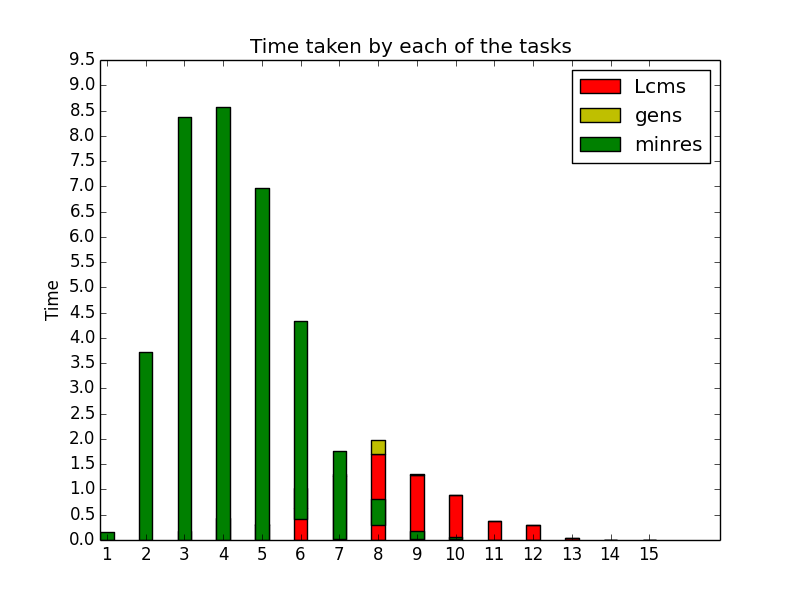}
\caption{Costs of each of the tasks for the consecutive $2$-out-of-$16$ ideal}
\label{fig:computations1}
\end{center}
\end{figure}

\begin{figure}[htbp]
\begin{center}
\includegraphics[scale=0.47]{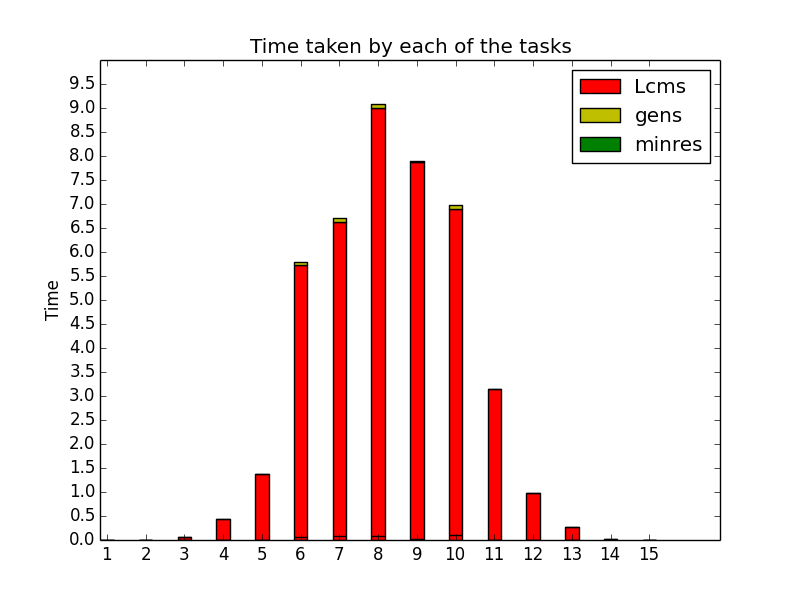}
\caption{Costs of each of the tasks for the cut ideal of $K_5$} 
\label{fig:computations2}
\end{center}
\end{figure}

\section{Failure distributions and signatures}\label{sec:statistics}

We finish our paper by applying the above considerations on the $\lcm$-lattice of system ideals to probability analysis. We have a tool for computing moments of the probability distribution of the number of elementary cuts (failures) of a given system.

\subsection{Computing means and higher moments}
In the case of binary (squarefree) ideals, which includes the case of the $k$-out-of-$r$ ideal and the cut ideal of a network in this paper,
the computation of the moment of the distribution of $Y$, the number of elementary cuts, is straightforward.

\begin{Lemma}\label{lem:mean}
Let ${\mathcal C}$ be the set of elementary cuts for a network reliability
problem, and let $Y$ be the number of elementary cuts. Then under the Erd\"os-R\'enyi independence model with probability $p$, the expectation of $Y$  considered as a random variable, is given by
$$\mbox{E}(Y) = \sum_{\alpha \in \mathcal C} p^{|\alpha|},$$
where $ |\alpha|$ is the degree of $\alpha$.
\end{Lemma}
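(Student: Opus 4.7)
The plan is to prove the formula by writing $Y$ as a sum of indicator variables and then using linearity of expectation together with independence of edge failures. Recall that an elementary cut $\alpha \in \mathcal{C}$ corresponds to a squarefree monomial $x^\alpha$, and the event that this cut occurs is precisely the event that all the edges whose indices lie in $\supp(\alpha)$ fail simultaneously.

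First I would introduce, for each $\alpha \in \mathcal{C}$, the indicator random variable $I_\alpha$ which equals $1$ if every edge in $\supp(\alpha)$ has failed, and $0$ otherwise. By the definition of $Y$ as the number of elementary cuts that have occurred, we have the identity
\[
Y = \sum_{\alpha \in \mathcal{C}} I_\alpha.
\]
Taking expectations and applying linearity gives $E(Y) = \sum_{\alpha \in \mathcal{C}} E(I_\alpha) = \sum_{\alpha \in \mathcal{C}} \prob(I_\alpha = 1)$.

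Next I would evaluate each probability $\prob(I_\alpha = 1)$. Under the Erdős-Rényi independence model each of the $m$ edges fails independently with probability $p$. The event $\{I_\alpha = 1\}$ is the intersection of the $|\alpha|$ independent events that each edge indexed by an element of $\supp(\alpha)$ fails, so by independence
\[
\prob(I_\alpha = 1) = \prod_{i \in \supp(\alpha)} p = p^{|\alpha|}.
\]
Substituting back into the linearity-of-expectation formula yields the claim.

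There is essentially no obstacle here; the statement is a direct application of linearity of expectation combined with the independence assumption, and the only subtle point is matching the algebraic notation (the squarefree monomial $x^\alpha$ with degree $|\alpha|$) to the combinatorial statement (a cut consisting of $|\alpha|$ edges). Once this correspondence is noted, the proof is immediate.
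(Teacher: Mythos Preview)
Your proof is correct and follows essentially the same approach as the paper: write $Y$ as a sum of indicator variables $I_\alpha$ over the elementary cuts, apply linearity of expectation, and then use independence of edge failures to compute $\prob(I_\alpha=1)=p^{|\alpha|}$. The paper additionally factors each indicator as $X_\alpha=\prod_i X_i^{\alpha_i}$ in terms of the per-component failure indicators $X_i$, but this is only a notational elaboration of the same argument.
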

\begin{proof}
Let $X_{\alpha}$ be the indicator function for the cut ${\alpha}$. Then $Y = \sum_{\alpha \in \mathcal C} X_{\alpha}$ and  $\mbox{E}(Y) = \mbox{E} (\sum_{\alpha \in \mathcal C} X_{\alpha}) = \sum_{\alpha \in \mathcal C} \mbox{E}(X_{\alpha})$. But for any $\alpha$, $X_{\alpha} = \prod_{i=1}^n X_i^{\alpha_i},$ where $X_i$ is the indicator function of the failure of the $i^{\rm th}$ component for $i=1, \ldots, n$. However, for any indicator function
$\mbox{prob} \{X_{\alpha} = 1\} = \mbox{E}( X_{\alpha})$ by independence in the Erd\"os-R\'enyi model.
Thus $\mbox{E}( X_{\alpha}) = \mbox{E}(\prod_{i=1}^n X_i^{\alpha_i}) = \prod_{i=1}^n \mbox{E}(X_i^{\alpha_i}) = p^{|\alpha|} $ which completes the proof. 
\end{proof}

\begin{Theorem}
For the complete graph $K_n$ the mean value is:
\begin{equation*}\label{orderpull} 
\mu_n=
\begin{cases}
\sum_{k=1}^{[\frac{n}{2}]}{n\choose k} p^{k(n-k)}\quad\quad\quad\quad\quad\quad\ {\rm \ \ if\  }n\  {\rm is\ odd};
\\
\\
\sum_{k=1}^{[\frac{n}{2}]-1}{n\choose k} p^{k(n-k)}+
\frac{1}{2}{n\choose \frac{n}{2}} p^{(\frac{n}{2})^2}\  {\rm \ \ if\ }n\  {\rm is\ even}.
\end{cases}
\end{equation*}
\end{Theorem}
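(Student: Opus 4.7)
The plan is to apply Lemma \ref{lem:mean} and then translate the sum over elementary cuts into a sum over bipartitions of the vertex set of $K_n$.

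First I would recall that for the complete graph $K_n$, the minimal generators of the cut ideal $I_n$ are in bijection with the bipartitions $\{A,B\}$ of $[n]$ into two nonempty blocks: the monomial associated with such a bipartition has as support precisely the set of edges between $A$ and $B$. Consequently, if $|A|=k$ then $|B|=n-k$, the associated monomial $m_{\{A,B\}}$ has degree $k(n-k)$, and by Lemma \ref{lem:mean}
\[
\mu_n = \EE(Y) = \sum_{\alpha \in \mathcal{C}} p^{|\alpha|} = \sum_{\{A,B\}} p^{|A|\,|B|},
\]
where the last sum runs over unordered bipartitions of $[n]$ into two nonempty blocks.

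Next I would reorganize the sum by the size $k$ of the smaller block. For each $k$ with $1 \leq k < n-k$, the number of bipartitions with $|A|=k$ is $\binom{n}{k}$, since each unordered pair $\{A,B\}$ is represented uniquely by its smaller block. This contributes $\binom{n}{k} p^{k(n-k)}$ to $\mu_n$. The case $k = n-k$ only occurs when $n$ is even; then $k = n/2$ and the subsets of size $n/2$ pair up, so we obtain $\frac{1}{2}\binom{n}{n/2}$ bipartitions with exponent $(n/2)^2$.

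Putting these pieces together gives exactly the two-case formula in the statement: when $n$ is odd, $k$ ranges over $1,\dots,\lfloor n/2 \rfloor = (n-1)/2$ with no ``diagonal'' contribution; when $n$ is even, $k$ ranges over $1,\dots,n/2-1$ with the extra middle term $\tfrac12\binom{n}{n/2}p^{(n/2)^2}$ coming from the self-complementary partitions. There is no real obstacle here beyond the bookkeeping of the parity case analysis; the only point requiring attention is avoiding the double-count of the bipartition $\{A,[n]\setminus A\}$ coming from $A$ and its complement, which is precisely what forces the factor $\tfrac12$ when $n$ is even and $k=n/2$.
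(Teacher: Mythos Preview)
Your proof is correct and follows essentially the same approach as the paper: apply Lemma~\ref{lem:mean}, identify the elementary cuts of $K_n$ with bipartitions $\{A,B\}$ of $[n]$, observe that the degree of the associated monomial is $|A|\cdot|B|=k(n-k)$, and then group the bipartitions by the size $k$ of the smaller block, taking care of the factor $\tfrac12$ when $n$ is even and $k=n/2$. The paper's proof is slightly terser and additionally mentions that the total number of such bipartitions is the Stirling number $S(n,2)$, but the substance of the argument is the same as yours.
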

\begin{proof}
By Lemma~\ref{lem:mean} we just need to count the number of elementary cuts, i.e. $2$-patitions of the graph. By 
\cite[Corollary 6.8]{Postnikov} this number is equal to $S(n,2)$ where 
denotes the Stirling number of the second kind (i.e. the number of ways to partition a set of $n$ elements into $k$ nonempty subsets). 
Note that for a partition with $k$ vertices in one part, and $n-k$ vertices in the other part, we have $k(n-k)$ edges between these two parts, and the number of such partitions is ${n\choose k}$. In case that $n$ is even and $k=\frac{n}{2}$, we have to divide this number by two because of double counting.
\end{proof}

\begin{Theorem}
For the sequential  $k$-out-of-$n$ ideal $I_{k,n}$, the mean value is $\mu_{k,n}=(n-k+1) p^k$.
\end{Theorem}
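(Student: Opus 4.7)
The plan is to apply Lemma~\ref{lem:mean} directly. The consecutive $k$-out-of-$n$ system, by definition, fails precisely when $k$ consecutive components fail. So its elementary cuts correspond to the minimal monomial generators of $I_{k,n}$, which are exactly the squarefree monomials
\[
x_i x_{i+1} \cdots x_{i+k-1}, \qquad i = 1, 2, \ldots, n-k+1.
\]
Thus there are $n-k+1$ elementary cuts, each of degree $k$.

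Now I invoke Lemma~\ref{lem:mean}, which gives $E(Y) = \sum_{\alpha \in \mathcal{C}} p^{|\alpha|}$ under the Erd\H{o}s--R\'enyi independence model with failure probability $p$. Substituting the enumeration above,
\[
\mu_{k,n} = \sum_{i=1}^{n-k+1} p^{k} = (n-k+1)\, p^{k},
\]
which is the claimed formula.

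The only potential obstacle is justifying that the set $\mathcal{C}$ of elementary cuts really coincides with the minimal generating set of $I_{k,n}$; but this is immediate from the coherence assumption recalled in \S1, which identifies the minimal failure states with the exponent vectors of the minimal monomial generators of the failure ideal. No delicate combinatorics or algebra is required beyond counting the $n-k+1$ windows of $k$ consecutive indices in $\{1,\dots,n\}$.
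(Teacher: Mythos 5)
Your proposal is correct and follows exactly the paper's argument: the sequential $k$-out-of-$n$ ideal has $n-k+1$ generators of degree $k$, and Lemma~\ref{lem:mean} then gives $\mu_{k,n}=(n-k+1)p^k$. The extra remark identifying elementary cuts with minimal generators is a harmless elaboration of the same proof.
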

\begin{proof}
In this case we have $n-k+1$ cut momomials of degree $k$. 
Thus the result follows by Lemma~\ref{lem:mean}.
\end{proof}

\begin{Remark}{\rm 
The same method can be used to obtain higher order moments. For the second non-central moment, $\mu_2=\mbox{E}(Y^2)$ we write
\begin{eqnarray*}
\mu_2 & = & \mbox{E}\left(\left(\sum_{\alpha \in \mathcal C}X_{\alpha}\right)^2\right) \\
& = &\mbox{E} \left( \sum_{\alpha \in \mathcal C} X_{\alpha}^2 + \sum_{\alpha, \beta \in {\mathcal C}\atop \alpha \neq \beta} X_{\alpha} X_{\beta}\right).
\end{eqnarray*}
Noting that $X_{\alpha}^2= X_{\alpha}$ and $X_{\alpha} X_{\beta} = X_{\alpha \wedge \beta}$, and using the argument above we have
\[
\mu_2 = \sum_{\alpha \in \mathcal C} p^{|\alpha|} + 2 \sum_{\gamma \in {\mathcal C}_2} p^{|\gamma|},
\]
where ${\mathcal C}_2$ is the set of raw monomial from the first $\lcm$ list.
\smallskip

Similarly we see that the $k$-th non-central moment can be written in terms of the  $\lcm$-ideals up to the $k^{\rm th}$ level. 
}
\end{Remark}

\subsection{Examples}
We develop the case of sequential $2$-out-of-$6$ ideal with
all the accompanying $\lcm$-ideals in more details. We compute the Hilbert series $H_k(s,t)$ based on Aramova-Herzog-Hibi
version of the Hilbert series for the  $\lcm$-ideals $I_k$ for $k=1, \ldots, 5$. The ideal is
$I=\langle x_1x_2,x_2x_3,x_3x_4,x_4x_5,x_5x_6 \rangle.$
The Hilbert series are
\begin{equation*}\begin{split}
H_1(s,t)& =1-( x_1x_2+x_2x_3 + x_3x_4 + x_4x_5 + x_5x_6)t \\
&+ ( x_1x_2x_3+ x_1x_2x_3x_4+ x_1x_2x_4x_5+ x_1x_2x_5x_6
+x_2x_3x_4+ x_2x_3x_4x_5\\& +x_2x_3x_5x_6+ x_3x_4x_5 + x_3x_4x_5x_6+ x_4x_5x_6+ x_1x_2x_3x_4)t^2 \\
& +(x_1x_2x_3x_4x_5+x_1x_2x_3x_5x_6
+x_1x_2x_3x_4x_5 +x_1x_2x_3x_4x_5x_6\\& +x_1x_2x_4x_5x_6 +x_2x_3x_4x_5
+x_2x_3x_4x_5x_6+x_2x_3x_4x_5x_6 + x_3x_4x_5x_6)t^3 \\
&+( x_1x_2x_3x_4x_5+ x_1x_2x_3x_4x_5x_6 + x_1x_2x_3x_4x_5x_6 \\
& + x_1x_2x_3x_4x_5x_6 + x_2x_3x_4x_5x_6)t^4 -x_1x_2x_3x_4x_5x_6t^5 \\
&\\
H_2(s,t)& = 1- (x_1x_2x_3+x_1x_2x_3x_4+x_1x_2x_4x_5+ x_1x_2x_5x_6 + x_2x_3x_4
+ x_2x_3x_4x_5 \\& +x_2x_3x_5x_6 +x_3x_4x_5 +x_3x_4x_5x_6+ x_4x_5x_6)t+
(2x_1x_2x_3x_4+ 2x_1x_2x_3x_4x_5\\
& + 2x_1x_2x_3x_5x_6 + 2x_1x_2x_3x_4x_5 + 2x_1x_2x_3x_4x_5x_6
+ 2x_1x_2x_4x_5x_6\\
& + 2x_2x_3x_4x_5+ 2x_2x_3x_4x_5x_6+ 2x_2x_3x_4x_5x_6+ 2x_3x_4x_5x_6)t^2 \\
& - ( 3x_1x_2x_3x_4x_5+3x_1x_2x_3x_4x_5x_6 +3x_1x_2x_3x_4x_5x_6 +3x_1x_2x_3x_4x_5x_6\\
& +3x_2x_3x_4x_5x_6)t^3 + 4x_1x_2x_3x_4x_5x_6 t^4 \\
\end{split}\end{equation*}
\begin{equation*}\begin{split}
H_3(s,t)& = 1 -( x_1 x_2x_3x_4+x_1x_2x_3x_4x_5 + x_1x_2x_3x_5x_6 + x_1x_2x_3x_4x_5 \\
& + x_1x_2x_3x_4x_5x_6 +x_1x_2x_4x_5x_6+x_2x_3x_4x_5 +x_2x_3x_4x_5x_6 + x_2x_3x_4x_5x_6\\
& +x_3x_4x_5x_6)t + (3x_1x_2x_3x_4x_5 + 3x_1x_2x_3x_4x_5x_6+ 3x_1x_2x_3x_4x_5x_6 \\
& + 3x_1x_2x_3x_4x_5x_6 + 3x_2x_3x_4x_5x_6)t^2 -6x_1x_2x_3x_4x_5x_6 t^3 \\
& \\
H_4(s,t) & = 1- (x_1x_2x_3x_4x_5+x_1x_2x_3x_4x_5x_6 +x_1x_2x_3x_4x_5x_6 + x_1x_2x_3x_4x_5x_6 \\
& +x_2x_3x_4x_5x_6)t +4x_1x_2x_3x_4x_5x_6t^2 \\
& \\
H_5 (s,t)& = 1 -x_1x_2x_3x_4x_5x_6. \\
\end{split}\end{equation*}
Under the usual failure model, of independent component (edge) failure with probability $p$, if $Y$ is the number of failures the survivor are given by:
\begin{equation*}\begin{split}
P_k & = F(k) \\
& = \mathbb{P}\{Y \geq k\}\\
& = 1 - H_k(p,t).
\end{split}\end{equation*}
These are given by:
\begin{equation*}\begin{split}
P_1 &= 5p^2-4p^3-3p^4+4p^5-p^6 \\
P_2 & = 4p^3-6p^5+3p^6 \\
P_3 & = 3p^4 - 2p^6 \\
P_4 & = 2p^5 -p^6 \\
P_5 & = p^6
\end{split}\end{equation*}

\begin{figure}[htbp]
\begin{center}
\includegraphics[scale=0.3]{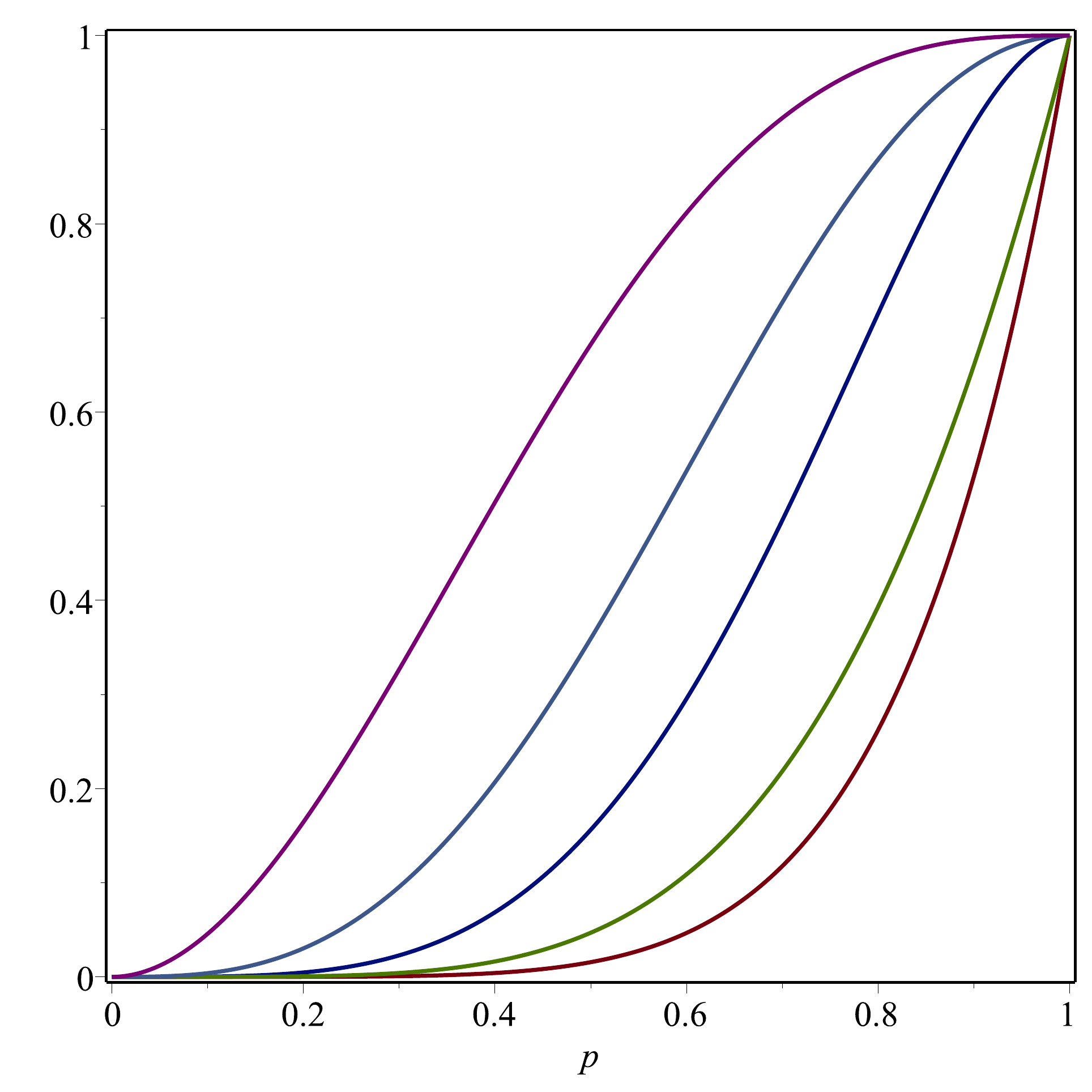}
\caption{Plots of $P_k(p)$, $k=1,\ldots,5$, from left to right, respectively}
\label{sizesConsecutive}
\end{center}
\end{figure}

Setting $p_0 = 1- P_1$, $p_k = P_k-P_{k+1},\;k = 1,..., 4$ and $p_5 = P_5 $ we find the
(discrete) distribution as
\begin{equation*}\begin{split}
p_0 &=  1-5p^2+4p^3+3p^4-5p^5+p^6 \\
p_1 &=  5p^2-8p^3-3p^4+10p^5-4p^6 \\
p_2 & = 4p^3-3p^4-6p^5+5p^6 \\
p_3 & = 3p^4 -2p^5 -p^6\\
p_4 & = 2p^5 -2p^6 \\
p_5 & = p^6
\end{split}\end{equation*}

The mean of this distribution is $\mu_{6,2} = \sum_{i=0}^5 i p_i = 5 p^2$. The form of the distribution for general sequential $k$-out-of-$n$
was found relatively recently together with the general form of the mean, confirmed in our case: 
\[\mu_{n,k} = (n-k+1)p^k,\]
see \cite{Ling}. In probability theory and related areas this might be called ``the expected number of runs of size $k$ in a Bernoulli sequence of length $n$ and probability $p$". Care has to be taken, in accessing the literature, concerning the definition of a run. For example whether overlaps are counted, as here, this distinguishes from isolated runs of length $k$.

For the signature we need to find the  intersection
$\tilde{I}_k$ of the $k$-out-of-$n$ ideals for $k=1, \ldots,6$ with the sequential $k$-out-of-$n$ ideal $I_1$,
 noting that the latter is the basic failure ideal. A simple way to carry out this
calculation is to identify for each $k=1, \ldots, 6$ which monomial of degree $k$ occur in $I_1$.

For $k=1$, none of $x_1, \ldots, x_6$ lie in $I_1$ and for $k=2$ we obtain $\tilde{I}_2 = I_1$. 
Similarly:
\begin{equation*}\begin{split}
\tilde{I}_3 & = \langle x_1x_2 x_3, x_2x_3x_4, x_3x_4x_5,x_4x_5x_6 \rangle \\
\tilde{I}_4 & = \langle x_1x_2x_3x_4, x_1x_2x_4x_5, x_1x_2x_5x_6,x_2x_3x_4x_5,x_2x_3x_5x_6,x_3x_4x_5x_6 \rangle \\
\tilde{I}_5 & = \langle x_1x_2x_3x_4x_5, x_1x_2x_3x_5x_6,x_1x_2x_4x_5x_6, x_2x_3x_4x_5x_6 \rangle \\
\tilde{I}_6 & = \langle x_1x_2x_3x_4x_5x_6 \rangle.
\end{split}\end{equation*}
The associated cumulative probabilities including $\tilde{I}_2$ are:
\begin{equation*}\begin{split}
Q_2 & = 5p^2-4p^2-3p^4+4p^5-p^6 \\
Q_3 & = 4p^3 -3p^4 \\
Q_4 & = 6p^4 - 6p^5 + p^6 \\
Q_5 & = 4p^5 - 3p^6 \\
Q_6 & = p^6.
\end{split}\end{equation*}
Again by taking differences we obtain the raw signature probabilities as
\begin{equation*}\begin{split}
q_2 & = 5p^2-8p^3+4p^3-p^6\\
q_3 &  = 4p^3 -9p^4+6p^5-p^6\\
q_4 & = 6p^4-10p^5+4p^6\\
q_5 & = 4p^5-4p^6 \\
q_6 & = p^6.
\end{split}\end{equation*}
From these we compute the signatures as $s_j = \frac{q_j}{P_1}$ for $j=2,\ldots,6$.

One purpose of this paper is to present the $\{p_k\}_{k=1}^n$, namely the distribution of the number of
elementary cuts as an alternative ``signature" to the classical signature distribution, of the number of failed components, in the event of failure.
But we can also study systems by looking at several different types of signature, what might be called multivariate signature analysis. To make this point clear we compute, for the current example, the {\em joint} distribution, that is to say the distribution
of the bivariate random variables $(Y, Z)$, where $Y$ is the number of elementary cuts and $Z$ is the number of failed components, conditional on failure.

Thus  take $p=\frac{1}{2}$, which correspond to simple counting, because then every binary state vector has probability $\frac{1}{2^6}$. The table below
counts the multiplicity of each pair $(Y,Z)=(y,z)$,  which are also failure cells, 20 cells is all (blank cells denote zero). We note the close association between the two types of signature.

\vspace{3mm}
\begin{center}
\begin{tabular}{|c|c|c|c|c|c|c|}
  \hline
  \tiny{6} &   &   &   &   & 1   \\ \hline
  \tiny{5} &   &   & 2 & 2 &     \\ \hline
  \tiny{4} &   & 3 & 3 &   &    \\ \hline
  \tiny{3} &   & 4 &   &   &    \\ \hline
  \tiny{2}  & 5 &   &   &   &    \\ \hline
 $^z\diagup_y$ & \tiny{1} & \tiny{2} & \tiny{3} & \tiny{4} & \tiny{5}   \\
\hline
\end{tabular}

\vspace{3mm}
Table 1. Elementary cuts via component failure, for the sequential $2$-out-of-$6$ system.
\end{center}

\section{Acknowledgements}
The second and third authors were partially supported by Ministerio de Economia y Competitividad, Spain, under grant MTM2013-41775-P.

\smallskip

\end{document}